\newtheorem{thm}{Theorem}[section]
\newtheorem{lem}[thm]{Lemma}
\newtheorem{cor}[thm]{Corollary}
\newtheorem{prop}[thm]{Proposition}
\newtheorem{rem}[thm]{Remark}
\newtheorem{defi}[thm]{Definition}
\newcommand{\Dav}{\mathsf D}
\DeclareMathOperator{\ord}{ord}
\begin{document}

\title{On the lower bounds of Davenport constant}

\author{Chao Liu}
\address{Center for Combinatorics, LPMC-TJKLC\\
 Nankai University\\
 Tianjin 300071, P.R. China}
\email{chaoliuac@gmail.com}

\urladdr{https://chaoliu.science}

\begin{abstract}
Let $G = C_{n_1} \oplus \dots \oplus C_{n_r}$ with $1 < n_1 | \dots | n_r$ be a finite abelian group. The Davenport constant $\Dav(G)$ is the smallest integer $t$ such that every sequence $S$ over $G$ of length $|S|\ge t$ has a non-empty zero-sum subsequence. It is a starting point of zero-sum theory. It has a trivial lower bound  $\Dav^*(G) = n_1 + \dots + n_r - r + 1$, which equals $\Dav(G)$ over $p$-groups. We investigate the non-dispersive sequences over groups $C_n^r$, thereby revealing the growth of $\Dav(G)-\Dav^*(G)$ over  non-$p$-groups $G = C_n^r \oplus C_{kn}$ with $n,k \ne 1$. We give a general lower bound of $\Dav(G)$ over non-$p$-groups and  show that if $G$ is an abelian group with $\exp(G)=m$ and  rank $r$, fix $m>0$ a non-prime-power, then for each $N>0$ there exists an $\varepsilon>0$ such that if $|G|/m^r<\varepsilon $, then $\Dav(G)-\Dav^*(G)>N$.
\end{abstract}

\keywords{Davenport constant, abelian group, zero-sum sequence, non-dispersive sequence}

\subjclass[2010]{11B30, 11P70, 20D60}

\maketitle

\section{Introduction and main results}
\bigskip

The Davenport constant has been studied since the 1960s. It naturally occurs in various branches of combinatorics, number theory, and geometry (see \cite[Chapter $5$]{GeKo2006non} and \cite{GaoGero06}). Early work on the Davenport constant and on the Erd\H{o}s-Ginzburg-Ziv Theorem  are considered as starting points of zero-sum theory.  The goal of the present paper is to provide new lower bounds for the Davenport constant.

Let $G$ be an additively written finite abelian group, say $G \cong C_{n_1}\oplus\dots\oplus C_{n_r}$, where $r=\mathsf r(G)$ is the rank of $G$ and $1< n_1|\dots|n_r$, and set $\Dav^*(G)=1+\sum_{i=1}^{r}(n_i-1)$. If $S=g_1\cdot\ldots\cdot g_\ell$ is a sequence over $G$, then $|S|=\ell$ is its length and $S$ is called a zero-sum sequence if its sum $\sigma(S)=g_1+\dots+g_\ell$ is equal to $0$. The Davenport constant $\Dav(G)$ of $G$ is the smallest integer $\ell\in\mathbb N$ such that every sequence $S$ over $G$ of length $|S|\geq\ell$ has a nonempty zero-sum subsequence. A straightforward example shows that $\Dav^*(G) \leq \Dav(G)$. Already in the 1960s it was proved that equality holds for $p$-groups and for groups having rank $r(G)\leq 2$ (see \cite[Chapter $5$]{GeKo2006non}). Here we refer to a couple of papers (\cite{re[1],re[2],re[5],re[6],re[7]}, \cite[Corollary 4.2.13]{re[3]}) of the last decade offering a growing list of groups $G$ satisfying $\Dav(G) = \Dav^*(G)$. However, it is still open whether or not equality holds for groups of rank three or for groups of the form $C_n^r$.

The first example of $\Dav(G) > \Dav^*(G)$ is due to P.C. Baayen in 1969. Let $G = C_2^{4k}\oplus C_{4k + 2}$ with $k\in\mathbb N_+$, then $\Dav(G)\ge\Dav^*(G) + 1$ (\cite[Theorem 8.1]{E1969}). We briefly introduce some works on the lower bounds of Davenport constant.
\begin{enumerate}

\item   Let $G=C_n^{(k-1)n+\rho}\oplus C_{kn}$ with  $n, k\ge 2$, $\gcd(n,k)=1$ and $0 \le \rho \le n-1$.
\begin{enumerate}
\item If $\rho\ge 1$ and $\rho\not\equiv n \pmod k$, then $\Dav(G)\ge \Dav^*(G)+\rho$.
\item If $\rho\le n-2$ and $x(n-\rho +1)\not\equiv n \, \pmod k$ for any $x\in[1,n-1]$, then $\Dav(G)\ge \Dav^*(G)+\rho +1$.     (Emde Boas and Kruyswijk, 1969)
\end{enumerate}

\item Let $ G = C_m\oplus C_n^2\oplus C_{2n}$ with $m,n\in \mathbb N_{\ge 3}$ odd and $m|n$. Then $\Dav(G)\ge\Dav^*(G) + 1$. (Geroldinger and Schneider,  \cite{geroldinger1992Dav}, 1992) 

\item   Let $G = C_2^{r - 1}\oplus C_{2k}$ with $k> 1$ odd. Then $\Dav(G) - \Dav^*(G) \ge \max \{ \log_2 r - \alpha(k) - 2k + 1,\, 0 \},$
where $\alpha(k) = i$ iff $2^{i-1} + 1 < k \le  2^i + 1$. (Mazur,  \cite{mazur1992note}, 1992)
\item Let $G = C_2^i\oplus C_{2n}^{5 - i}$ with $i\in [1,4]$ and $n\ge 3$ odd. Then $\Dav(G) \ge \Dav^*(G) + 1$. (See \cite{gao1999,geroldinger2012Dav,geroldinger1992Dav} for $i = 2$, $i = 1$ and $i \in \{3, 4\}$ separately)  
\end{enumerate}

The third result shows the growth of $\Dav(G)-\Dav^*(G)$ over $G=C_2^{r - 1}\oplus C_{2k}$ with $k$ odd. The author, Mazur, also asked if there are similar results when $k$ is even \cite{mazur1992note}.

This paper will show the growth of $\Dav(G)-\Dav^*(G)$ over non-$p$-groups $G=C_n^r\oplus C_{kn}$ with any $n,k\ne 1$ (see Theorem \ref{lzfs} and Corollary \ref{est}). We show $\Dav(G)-\Dav^*(G)$ grows at least logarithmically with respect to $r$ for a fixed $n$.  For the cases of $\gcd(k,n)\ne 1$, this is the first time to prove  $\Dav(G)=\Dav^*(G)$ false. We show that $\Dav(G)- \Dav^*(G)>0$ can happen even if the exponent of $G$ is arbitrarily large (see Remark \ref{taolun}). So Mazur's result is improved and more results are derived.

We prove the result with a new method. By Lemma \ref{nondisp2dg}, this paper connects the lower bounds of Davenport constant to the study of {\sl non-dispersive sequence}, which goes back to a conjecture of Graham reported in \cite{Er-Sz76}. A sequence $S$ over $G$ is called non-dispersive if all nonempty zero-sum subsequences of $S$ have the same length. In 1976, Erd\H{o}s and Szemer\'{e}di \cite{Er-Sz76} proved that if $S$ is a non-dispersive sequence over $C_p$ of length $p$, then $S$ takes at most two distinct values, where $p$ is a sufficiently large prime. Gao {\sl et al.} \cite{Ga-Ha-Wa10} and Grynkiewicz \cite{Gr11} independently improved  this result to all positive integers. A related question was naturally proposed by Girard \cite{Gir12} to 􏲁􏱲􏱩􏱫􏱾􏱷determine the longest length of non-dispersive sequences over any group $G$. The answer is known for group $C_2^r$ (see \cite{GZZ2015}). We   investigate   non-dispersive sequences over groups $C_n^r$ with  $n\ge 2$ (see Theorem \ref{uniquelength}), thereby improving the lower bounds of Davenport constant over $C_n^r\oplus C_{kn}$. 

We also give general lower bounds for all non-$p$-groups (see Theorem \ref{gene}) and  some other  interesting corollaries.

\section{Preliminaries}

Our notation and terminology of sequences over abelian groups is consistent with  \cite{GeKo2006non, Gr13}.

Let $n\in \mathbb N_{\ge 2}$ and set $n=pq$ for some prime $p$ and some $q\in[1,n]$. For any $\ell\in\mathbb N_{+}$, we define $\theta(\ell;p),\,\omega(\ell;n,p)$ and $\mathbf M(\ell;p,q)$ as follows through out this paper.

\begin{enumerate}[1.]
\item
\[\theta(\ell;p)=\begin{cases}
\frac{2(p^\ell-1)}{p-1}-\ell, & \text{if $p>2$}
\\
2^\ell-1-\ell, & \text{if $p=2$}  \end{cases}.\]

\item
\[\omega(\ell;n,p)=\begin{cases}
p^{\ell-1}n, & \text{if $p>2$}
\\
2^{\ell-2}n, & \text{if $p=2$}  \end{cases}.\]

\item
For any $\ell\in\mathbb N_{+}$, the set $\mathbf M(\ell;p,q)$ is constructed by a recursive algorithm:
\begin{enumerate}[(i)]
\item $\mathbf M(1;p,q) = \begin{cases}\{q, (p - 1)q\},& \text{if } p > 2\\ \{q\},& \text{if } p = 2\end{cases}.$

\item $\mathbf M(\ell + 1;p,q) = \mathbf M(\ell;p,q) \times  \mathbf A \cup \{0\}^\ell \times  \mathbf M(1;p,q)$, where $\mathbf  A = \{0,q,\dots,(p - 1)q\}$.\end{enumerate}
\end{enumerate}

\begin{rem}
We can use a direct way to construct $\mathbf M(\ell)$ for $\ell\in\mathbb N_+$,   apart from the recursive algorithm given  before. In \eqref{algorithm2}, we can let $a_s=1$ and derive that
$$ \mathbf M(\ell) =\bigcup_{t=0}^{\ell-1} \{0\}^{t} \times \mathbf M(1) \times \mathbf A^{\ell-t-1}.
$$
Hence it follows a direct way to construct the non-dispersive sequences (in Theorem \ref{uniquelength}) and zero-sum free sequences with the techniques in Lemma \ref{nondisp2dg}.

\end{rem}

Let  $|w|_n$ denote the least nonnegative residue of an integer $w$ modulo $n$. Let $| \mathbf B|$ denote the cardinality of a set $ \mathbf B$. 

The elements of $ \mathbf M(\ell;p,q)$ are $\ell$-tuples of integers. We list the elements of $ \mathbf M(\ell;p,q)$ in some fixed but arbitrary order. Then $ \mathbf M(\ell;p,q)[i,j]$ denotes the $i$-th entry of the $j$-th element of $ \mathbf M(\ell;p,q)$. We often fix some $n,p$ and $q$ before considering $\theta(\ell;p)$, $\omega(\ell;n,p)$ and $\mathbf M(\ell;p,q)$. For convenience, we might omit the parameters  ``$n$", ``$p$" and ``$q$"  when no misunderstanding is likely to occur. Thus, $\theta(\ell)$, $\omega(\ell)$ and $\mathbf M(\ell)[i, j]$ will mean $\theta(\ell;p)$,  $\omega(\ell;n,p)$ and $\mathbf M(\ell;p,q)[i, j]$ unless otherwise stated.

\begin{prop}\label{PropM}
Let $n\in \mathbb N_{\ge 2}$ and set $n=pq$ for some prime $p$ and some $q\in[1,n]$. For any $\ell\in \mathbb{N_+}$, $ \mathbf M(\ell;p,q)$ has following three properties:
\begin{enumerate}[i.]

\item $| \mathbf M(\ell)| = \theta(\ell) + \ell$.

\item
For any $1 \le a_1 < \dots < a_s\le \ell $ and any $v_i\in[1,p - 1]$ with $a_i,v_i\in\mathbb N_+$ and $i\in[1,s]$, we have
$$\sum_{j = 1}^{| \mathbf M(\ell)|} \big|\sum_{i = 1}^{s} v_i \mathbf  M(\ell) (a_i,j)\big|_n = \omega(\ell).$$

\item
$$\sum_{j = 1}^{| \mathbf M(\ell)|} \big| - \sum_{i = 1}^{s} v_i  \mathbf M(\ell )[ a_i, j]\big|_n = \sum_{j = 1}^{| \mathbf M(\ell)|} \big| \sum_{i = 1}^{s} v_i \mathbf  M(\ell) [a_i, j] \big|_n. $$

\end{enumerate}
\end{prop}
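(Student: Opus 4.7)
I plan to prove all three parts simultaneously by induction on $\ell$, exploiting the recursion $\mathbf M(\ell+1)=\mathbf M(\ell)\times\mathbf A\cup\{0\}^\ell\times\mathbf M(1)$. Part (i) is bookkeeping: the recursion gives $|\mathbf M(\ell+1)|=p\,|\mathbf M(\ell)|+|\mathbf M(1)|$, and the closed form $|\mathbf M(\ell)|=\theta(\ell)+\ell$ follows by checking the two cases $p>2$ and $p=2$ separately.

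For (ii), I would first record two preliminary facts. By an easy induction, every entry of every element of $\mathbf M(\ell)$ is a multiple of $q$, since $\mathbf M(1)\subset q\mathbb Z$ and $\mathbf A\subset q\mathbb Z$. Moreover, for any $v\in[1,p-1]$ the map $a\mapsto va$ permutes $\mathbf A$ modulo $n$ (using $\gcd(v,p)=1$), and adding any $c\in q\mathbb Z$ still gives back $\mathbf A$; summing absolute residues yields
$$\sum_{a\in\mathbf A}\bigl|c+va\bigr|_n=0+q+2q+\dots+(p-1)q=\frac{n(p-1)}{2}\qquad\text{whenever }q\mid c,\ v\in[1,p-1],$$
and a parallel computation gives $\sum_{b\in\mathbf M(1)}|vb|_n=\omega(1)$ in both cases: for $p>2$, $|vq|_n+|v(p-1)q|_n=vq+(n-vq)=n$, and for $p=2$, $|q|_n=q=n/2$. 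The inductive step then splits on whether $a_s\le\ell$ or $a_s=\ell+1$. In the first case the sum depends only on the first $\ell$ coordinates, each element of $\mathbf M(\ell)$ appears $p$ times in the product block, and $\{0\}^\ell\times\mathbf M(1)$ contributes $0$; the total is $p\cdot\omega(\ell)=\omega(\ell+1)$. In the second case, for each fixed index $j_0$ the partial sum $c=\sum_{i<s}v_i\,\mathbf M(\ell)[a_i,j_0]$ is a multiple of $q$ (empty when $s=1$), so varying $a\in\mathbf A$ contributes $\tfrac{n(p-1)}{2}$, while the second block contributes $\omega(1)$. A short arithmetic check using (i) confirms $|\mathbf M(\ell)|\cdot\tfrac{n(p-1)}{2}+\omega(1)=\omega(\ell+1)$ in both cases.

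Part (iii) will follow from the stronger statement that $\mathbf M(\ell)$ is closed under coordinatewise negation modulo $n$. At the base, $-q\equiv(p-1)q\pmod n$ when $p>2$ and $-q\equiv q\pmod{2q}$ when $p=2$, so $-\mathbf M(1)\equiv\mathbf M(1)$; likewise $-\mathbf A\equiv\mathbf A$, and the recursion then preserves closure under negation. Consequently there is a permutation $\tau$ of $[1,|\mathbf M(\ell)|]$ satisfying $\mathbf M(\ell)[i,\tau(j)]\equiv-\mathbf M(\ell)[i,j]\pmod n$ for all $i,j$; reindexing the left-hand sum of (iii) by $\tau$ turns it into the right-hand sum, since $|\cdot|_n$ depends only on residues modulo $n$.

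The main technical obstacle is the arithmetic identity $|\mathbf M(\ell)|\cdot\tfrac{n(p-1)}{2}+\omega(1)=\omega(\ell+1)$ in the inductive step for (ii): it is precisely what pins down the peculiar forms of $\theta$ and $\omega$, and is the only point where the cases $p=2$ and $p>2$ require separate treatment. Everything else, in particular the multiset identity $\{c+va\bmod n:a\in\mathbf A\}=\mathbf A$ for $q\mid c$ and the coordinatewise negation symmetry of $\mathbf M(\ell)$, follows directly from the construction.
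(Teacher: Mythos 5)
Your proposal is correct and follows essentially the same route as the paper's proof: part (i) via the recursion $|\mathbf M(\ell+1)|=p|\mathbf M(\ell)|+|\mathbf M(1)|$, part (ii) via the decomposition $\mathbf M(\ell+1)=\mathbf M(\ell)\times\mathbf A\cup\{0\}^\ell\times\mathbf M(1)$ together with the multiset identity $\sum_{a\in\mathbf A}|c+va|_n=\tfrac{n(p-1)}{2}$ for $q\mid c$, and part (iii) via the symmetry $\mathbf M(\ell)\equiv-\mathbf M(\ell)\pmod n$. The only difference is organizational: you run a one-step induction distinguishing $a_s\le\ell$ from $a_s=\ell+1$, whereas the paper unrolls the recursion down to level $a_s$ when $a_s<\ell$; the two are equivalent and all the key computations match.
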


\begin{proof}
\noindent
\begin{enumerate}[1)]
\item
By the definition of $ \mathbf M(\ell)$ we can derive that $| \mathbf M(\ell + 1)| = | \mathbf M(\ell)|\cdot p + | \mathbf M(1)|,$ thus $| \mathbf M(\ell)| = \frac{(p^{\ell } - 1)| \mathbf M(1)|}{p - 1} = \theta(\ell) + \ell.$

\item
\noindent
{\bf Case 1.} $\ell = 1$.

In this case, $s=1$ and $a_1=1$. By the definitions of $ \mathbf M(1)$ and $v_1$, it is easy to infer that
\[\sum_{j=1}^{| \mathbf M(1)|}\big|v_1  \mathbf M(1)[1, j]\big|_n = \begin{cases}
p q, & \text{if $p>2$}
 \\
 q, & \text{if $p=2$}
 \end{cases}.\]\\

\noindent
{\bf Case 2.} $\ell \geq 2$ and $a_s = \ell$.

By the rules of Cartesian product and the definition of $ \mathbf M(1)$, we derive that
\[
\begin{aligned}
 \mathbf M(\ell) & =  \mathbf M(\ell-1) \times  \mathbf A \cup \{0\}^{\ell-1} \times  \mathbf M(1)
\\&=(\bigcup_{t=0}^{p-1} \mathbf M(\ell-1) \times \{tq\}) \cup \{0\}^{\ell-1} \times  \mathbf M(1).\end{aligned}
\]
Consequently,
\begin{equation}\label{mc21}
\begin{aligned}
\sum_{j=1}^{| \mathbf M(\ell)|}  \big|\sum_{i=1}^{s} v_i  \mathbf M(\ell) [a_i, j]\big|_n  =& \sum_{t=0}^{p-1} \sum_{j=1}^{| \mathbf M(\ell-1)|} \big|\sum_{i=1}^{s-1} v_i  \mathbf M(\ell-1) [a_i, j] +v_stq\big|_n\\
& + \sum_{j=1}^{| \mathbf M_1|}\big|0+v_s \mathbf  M(1)[1, j]\big|_n.
\end{aligned}
\end{equation}

Note that, for any $x \in\{0, q,\dots, (p-1)q\}$, by $v_s \in[1,p-1]$, we have $\gcd(v_s,p)=1$. Thus
\begin{equation}\label{mc22}
\sum_{t=0}^{p-1}|x + v_stq|_n = 0 + q + \dots + (p-1)q = \frac{(p-1)pq}{2}.
\end{equation}

Every $ \mathbf M(\ell-1) [a_i,j]$ is in $\{0, q,\dots, (p-1)q\}$. Therefore
\begin{equation}\label{mc23}
\sum_{i=1}^{s-1} v_i  \mathbf M(\ell-1) [a_i, j]  \in\{0, q,\dots, (p-1)q\}.
\end{equation}
By \eqref{mc21}, \eqref{mc22} and \eqref{mc23}, we have
\[
\begin{aligned}
\sum_{j=1}^{| \mathbf M(\ell)|}  \big|\sum_{i=1}^{s} v_i  \mathbf M(\ell) [a_i, j]\big|_n
& = \sum_{j=1}^{| \mathbf M(\ell-1)|}\frac{(p-1)pq}{2}  + \sum_{j=1}^{| \mathbf M(1)|}\big|v_s  \mathbf M(1)[1, j]\big|_n\\
& = \frac{(p^{\ell-1} - 1)| \mathbf M(1)|}{p - 1}\cdot\frac{(p-1)pq}{2} + \omega(1)\\
& = \begin{cases}
p^{\ell}q, & \text{if $p>2$}
 \\
2^{\ell - 1}q, & \text{if $p=2$}
 \end{cases}.
\end{aligned}
\]

\noindent
{\bf Case 3.} $\ell \geq 2$ and $a_s < \ell $.

Indeed, by the definition of $ \mathbf M_\ell$ and the rules of Cartesian product, we have
\begin{equation}\label{algorithm2}
\begin{aligned}
  \mathbf M(\ell ) & =  \mathbf M(\ell-1) \times  \mathbf A \cup \{0\}^{\ell-1} \times  \mathbf M(1)
\\ & = ( \mathbf M(\ell-2) \times  \mathbf A\cup \{0\}^{\ell-2} \times  \mathbf M(1)
 ) \times  \mathbf A \cup \{0\}^{\ell-1} \times  \mathbf M(1)
\\ & =  \mathbf M(\ell-2) \times  \mathbf A^2 \cup \{0\}^{\ell-2} \times  \mathbf M(1)\times \mathbf  A \cup \{0\}^{\ell-1} \times  \mathbf M(1)
\\ & \,\,\,\vdots
\\&= \mathbf M(a_s)\times  \mathbf A^{\ell-a_s}\bigcup_{t=a_s}^{\ell-1} \{0\}^{t} \times \mathbf  M(1) \times \mathbf  A^{\ell-t-1} .\end{aligned}
\end{equation}

Thus by \eqref{algorithm2} and $|\mathbf A^{\ell-a_s}| = p^{\ell - a_s}$, together with the result in {\bf Case 2.}, we can derive that
\[
\begin{aligned}
\sum_{j=1}^{|\mathbf M(\ell)|} \big|\sum_{i=1}^{s} v_i \mathbf M(\ell) [a_i, j]\big|_n
& = \sum_{j=1}^{|\mathbf M(a_s)|} \big|\sum_{i=1}^{s} v_i \mathbf M(a_s) [a_i, j]\big|_n\cdot p^{\ell-a_s}+0\\
& = \omega(a_s)\cdot p^{\ell-a_s}  = 
\begin{cases} 
p^{\ell}q, & \text{if $p>2$}
 \\
2^{\ell - 1}q, & \text{if $p=2$}
\end{cases}.
\end{aligned}
\]

\item
Since $ \mathbf A = - \mathbf A$ and $ \mathbf M(1) = - \mathbf M(1)$, by the definition of $\mathbf M(\ell)$, it follows that $\mathbf M(\ell) = - \mathbf M(\ell)$. Thus it is easy to infer that
\[
\begin{aligned}
\sum_{j = 1}^{|\mathbf M(\ell)|} \big| - \sum_{i = 1}^{s} v_i \mathbf M(\ell) [a_i, j]\big|_n = \sum_{j = 1}^{|\mathbf M(\ell)|} \big| \sum_{i = 1}^{s} v_i \mathbf M(\ell) [a_i, j] \big|_n.
\end{aligned}
\]
\end{enumerate}
\end{proof}

We need the following result which is a straightforward consequence of \cite[Lemma 1]{geroldinger1992Dav} and we omit the similar proof here. 

\begin{lem}\label{zhihe}
Let $G=C_{n_1}\oplus C_{n_2}\oplus \dots \oplus C_{n_r}$
with $1<n_1|n_2\dots |n_r$. Let $H_x= \oplus_{i\in I_x} C_{n_{i}}$, where $x\in[1,z]$, $z\in \mathbb N_+$, $\varnothing\neq I_x \subsetneq [1,r]$ and $I_{x} \cap I_{y}=\varnothing$ for any $x,y\in[1,z]$. Then
\[\Dav(G)-\Dav^*(G)\ge\sum_{x=1}^{z}(\Dav(H_x)-\Dav^*(H_x)).\]
\end{lem}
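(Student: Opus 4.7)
The plan is to manufacture a long zero-sum free sequence over $G$ by concatenating optimal witnesses on each $H_x$ with the standard construction on the leftover cyclic factors. Fix once and for all a basis decomposition $G = \bigoplus_{i=1}^{r}\langle e_i\rangle$ with $\ord(e_i)=n_i$, so that each $H_x$ is realised as the subgroup $\bigoplus_{i \in I_x}\langle e_i\rangle \le G$, and set $I_0 = [1,r]\setminus \bigcup_{x=1}^{z}I_x$. Then $G = H_1 \oplus \dots \oplus H_z \oplus \bigoplus_{i\in I_0}\langle e_i\rangle$ is an internal direct sum.

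For each $x \in [1,z]$, choose a zero-sum free sequence $S_x$ over $H_x$ of maximal length $|S_x| = \Dav(H_x)-1$. Form the sequence over $G$
\[
T \;=\; S_1 \cdot S_2 \cdots S_z \cdot \prod_{i \in I_0} e_i^{\,n_i-1},
\]
whose length is $\sum_{x=1}^{z}(\Dav(H_x)-1) + \sum_{i\in I_0}(n_i-1)$. The one point that needs argument is that $T$ is zero-sum free in $G$: given a nonempty subsequence $T' \mid T$ with $\sigma(T')=0$, apply the projection $\pi_x\colon G \to H_x$ coming from the direct sum. Since $T'$ meets $S_y$ (for $y\ne x$) and the $e_i$ (for $i \in I_0$) in elements of $\Ker \pi_x$, we obtain $\pi_x(T') = T'|_{S_x}$, which is then a zero-sum subsequence of $S_x$ in $H_x$; zero-sum freeness of $S_x$ forces $T'|_{S_x}$ to be empty. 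Applying similarly the projection onto each $\langle e_i\rangle$ for $i \in I_0$ rules out any copies of $e_i$ in $T'$ (since $1\le t\le n_i-1$ copies of $e_i$ never sum to $0$). Hence $T'$ is empty, a contradiction.

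Since $T$ is zero-sum free, $\Dav(G)\ge |T|+1$. Using $\Dav^*(H_x) = 1 + \sum_{i\in I_x}(n_i-1)$ and the disjointness of the $I_x$, the identity
\[
\Dav^*(G)-1 \;=\; \sum_{i=1}^{r}(n_i-1) \;=\; \sum_{x=1}^{z}\bigl(\Dav^*(H_x)-1\bigr) + \sum_{i\in I_0}(n_i-1)
\]
allows the $\sum_{i\in I_0}(n_i-1)$ terms to cancel in $\Dav(G)-\Dav^*(G) \ge (|T|+1)-\Dav^*(G)$, leaving exactly $\sum_{x=1}^{z}\bigl(\Dav(H_x)-\Dav^*(H_x)\bigr)$. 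The main conceptual step is the zero-sum freeness check, and the rest is bookkeeping; this is why the authors pass to the already-proved \cite[Lemma 1]{geroldinger1992Dav} and omit the computation.
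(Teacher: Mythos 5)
Your proof is correct and is exactly the standard concatenation argument that the paper omits (it cites \cite[Lemma 1]{geroldinger1992Dav}, i.e.\ the inequality $\Dav(H\oplus K)\ge \Dav(H)+\Dav(K)-1$ proved by the same construction): concatenating maximal zero-sum free sequences over the independent summands $H_x$ with $\prod_{i\in I_0}e_i^{n_i-1}$, checking zero-sum freeness via the projections, and doing the bookkeeping with $\Dav^*$. No issues.
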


\section{On non-dispersive sequences over $C_n^r$}

In this section, we will construct long non-dispersive sequences by $\mathbf M(\ell)$'s. 
 
\begin{thm}\label{uniquelength}
Let $G = C_n^r$, where $r\in \mathbb N_+$ and $n\in \mathbb N_{\geq 2}$, and let $p$ be a prime divisor of $n$. If $\ell\in \mathbb N_+$ such that $r\ge\theta(\ell;p)\ge 1$, then there exists a sequence $S$ over $G$ of length 
$$|S|=(n-1)r+(p-1)\ell=\Dav^*(G)+(p-1)\ell-1,$$ 
such that every nonempty zero-sum subsequence $T$ of $S$ is length of $$|T|=\omega(\ell;n,p).$$  
\end{thm}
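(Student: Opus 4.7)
Write $n=pq$ and fix a basis $e_1,\dots,e_r$ of $G=C_n^r$. The strategy is to construct $S$ explicitly from the columns of $\mathbf M(\ell)$, then verify by a ``coordinate-wise mod $n$'' argument that every nonempty zero-sum subsequence has length $\omega(\ell)$; Proposition~\ref{PropM} will be the main tool in the second step.

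For the construction, I would exploit the decomposition $\mathbf M(\ell)=\bigcup_{t=0}^{\ell-1}\{0\}^t\times\mathbf M(1)\times\mathbf A^{\ell-t-1}$ and attach to each of the $m=\theta(\ell)+\ell$ columns an element $g_j\in G$ whose coordinates are dictated by the column entries. Certain ``mixed'' columns are additionally distinguished by an auxiliary basis vector, the hypothesis $r\ge\theta(\ell;p)$ being calibrated so that enough distinguishers are available (possibly overlapping with the row-coordinate basis vectors in edge cases). The sequence $S$ is then the product of the $g_j$'s, with prescribed multiplicities, together with filler terms $e_i^{n-1}$ bringing the total length to $(n-1)r+(p-1)\ell$; the count uses the identity $|\mathbf M(1)|=p-1$.

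Next, let $T\mid S$ be any nonempty zero-sum subsequence, with multiplicities $c_i$ (filler) and $d_j$ (atoms). Projecting $\sigma(T)=0$ onto each coordinate of $G$ yields a system of congruences mod $n$. The key claim to extract is that the distinguishers force, within each block indexed by $t\in[0,\ell-1]$, the $d_j$'s to share a common residue $v_t\in[0,p-1]$ mod $p$, with at least one $v_t\ne 0$ by nonemptiness. Setting $a_1<\dots<a_s$ to be the indices where $v_{a_i}\in[1,p-1]$, the plan is then to rewrite
\begin{equation*}
|T|\;=\;\sum_i c_i+\sum_j d_j\;=\;\sum_{j=1}^{m}\Bigl|\sum_{i=1}^{s}v_{a_i}\mathbf M(\ell)[a_i,j]\Bigr|_n,
\end{equation*}
which by Proposition~\ref{PropM}(ii) equals $\omega(\ell)$; Proposition~\ref{PropM}(iii) handles the sign ambiguity arising because the zero-sum condition lives mod $n$ rather than in $\mathbb Z$.

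The main obstacle I foresee is the linear-algebraic bookkeeping of the key claim: ruling out exotic patterns of $d_j$'s that would violate the hypotheses of Proposition~\ref{PropM}(ii), and verifying that the weights $v_{a_i}$ indeed lie in $[1,p-1]$ as opposed to being negative residues or zero. Both the recursive structure of $\mathbf M(\ell)$ and the distinctness of the distinguishers are essential here; the argument most naturally proceeds by induction on $\ell$, peeling off one axis block at a time, with Proposition~\ref{PropM}(iii) invoked at each step to absorb sign changes produced by the reduction.
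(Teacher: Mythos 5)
Your plan correctly identifies the engine of the proof --- Proposition~\ref{PropM}(ii)--(iii) and a sequence of the shape (fillers $e_j^{n-1}$) $\cdot$ (atoms read off from $\mathbf M(\ell)$) --- but it is missing the one idea the construction actually turns on. If you build the atoms from $\mathbf M(\ell)$ \emph{unmodified}, then Proposition~\ref{PropM}(ii) gives $\sum_{j}\bigl|\sum_i v_i\mathbf M(\ell)[a_i,j]\bigr|_n=\omega(\ell)$ exactly, and the length of the corresponding zero-sum subsequence comes out to $\omega(\ell)+\sum_i v_i$: the filler multiplicities contribute the full $\omega(\ell)$ and the atoms contribute $\sum_i v_i$ on top. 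That is \emph{not} a constant, so the sequence is not non-dispersive. The paper's proof hinges on surgically replacing $\mathbf M(\ell)$ by an array $\mathbf W(\ell)$ of size exactly $\theta(\ell)$ (deleting the ``axis'' elements $\{0\}^t\times\{q,(p-1)q\}\times\{0\}^{\ell-t-1}$ and inserting $\{0\}^t\times\{1\}\times\{0\}^{\ell-t-1}$) so that the column sum drops to $\omega(\ell)-\sum_i v_i$ and cancels the atom count. Your proposal has no mechanism for this cancellation; the ``distinguishers'' and the ``key claim'' about forcing common residues $v_t$ address a problem that does not arise in the correct construction (there the weights $v_i$ lie in $[0,p-1]$ automatically because each atom occurs in $S$ with multiplicity exactly $p-1$), while leaving the real problem unsolved. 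Moreover the case $p=2$ needs a genuinely different surgery, since $\mathbf M(1;2,q)=\{q\}$ is a singleton and the replacement $q\mapsto 1$ changes the sum by $1-q$ rather than the needed $-1$; the paper handles this with the sets $\mathbf E,\mathbf F,\mathbf H,\mathbf I,\mathbf J$, and your sketch does not mention $p=2$ at all.

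There are also concrete bookkeeping errors. The atoms must be indexed by the $\ell$ coordinate positions (the ``rows''), i.e.\ $x_b=\sum_{j=1}^{\theta(\ell)}\mathbf W(\ell)[b,j]\,e_j$ for $b\in[1,\ell]$, each with multiplicity $p-1$, which is exactly how the surplus $(p-1)\ell$ over $\Dav^*(G)-1=(n-1)r$ arises and why the hypothesis $r\ge\theta(\ell)$ (the number of \emph{columns} of $\mathbf W(\ell)$) is what is needed. Your transposed construction with $\theta(\ell)+\ell$ column-indexed atoms cannot fit into a surplus of $(p-1)\ell$, and the identity you invoke for the count, $|\mathbf M(1)|=p-1$, is false for every prime $p\ge 5$ (one has $|\mathbf M(1;p,q)|=2$ for all odd $p$). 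As written, the proposal would not produce a sequence of the required length, let alone one all of whose nonempty zero-sum subsequences have length $\omega(\ell)$.
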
 

\begin{proof}
Set $n=pq$, where $q\in[1,n]$.
\item
\noindent
{\bf Case 1.} $p>2$.

It follows from $r \ge \theta(\ell;p) \ge 1$ that $\ell\ge 1$. Let
$$
\mathbf E(\ell) = 
\bigcup_{t=0}^{\ell-1} {\{0\}}^t\times\{q,(p-1)q\}\times{\{0\}}^{\ell-t-1}$$
 and $$\mathbf F(\ell) = 
  \bigcup_{t=0}^{\ell-1}{\{0\}}^t\times\{1\}\times{\{0\}}^{\ell-t-1}.$$ 
Let
$
\mathbf W(\ell) = \mathbf M(\ell)
\backslash \mathbf E(\ell)
 \cup \mathbf F(\ell)$. 
Thus by Proposition \ref{PropM}, we have  $|\mathbf W(\ell)| = |\mathbf M(\ell)|  - 2\ell + \ell= \theta(\ell).$

List the elements of $\mathbf W(\ell)$ in some fixed but arbitrary order. Let $\mathbf W(\ell) [i, j]$ denote the $i$-th entry of the $j$-th element of $\mathbf W(\ell)$. For any indices  $$1 \le a_1 < \dots < a_s\le \ell \text{ and }v_i\in[1,p - 1] \text{ with }i\in[1,s],$$ also by Proposition \ref{PropM} and $n=pq$, we have \\
\begin{equation}\label{len}
\begin{aligned}
\sum_{j=1}^{|\mathbf W(\ell)|} &\big| \sum_{i=1}^{s} v_i \mathbf W(\ell) [a_i, j]\big|_n \\
=& \sum_{j=1}^{|\mathbf M(\ell)|} \big|  \sum_{i=1}^{s} v_i \mathbf M(\ell) [a_i,j] \big|_n  - \sum_{i=1}^{s}(|v_iq|_n + |v_i(p-1)q|_n) + \sum_{i=1}^{s}|v_i|_n\\
= &\omega(\ell) - \sum_{i=1}^{s}n + \sum_{i=1}^{s}(n-v_i)
=  \omega(\ell)-\sum_{i=1}^{s}v_i.
\end{aligned}
\end{equation}
Let $C_n^r =  \oplus_{j = 1}^r \langle e_{j}\rangle$ with $\ord(e_{j}) = n$ for each $j\in[1,r]$. By
 $r\ge\theta(\ell;p)$, we can set $$x_b = \sum_{j = 1}^{\theta(\ell)}\mathbf W(\ell)[b,j]\cdot e_j, \text{ where } b\in[1,\ell],$$
and let sequence \[
S = \prod_{j = 1}^{r} e_j^{n - 1} \prod_{b=1}^{\ell}x_b^{p-1}.
\]
Suppose that $S_1$ is a nonempty zero-sum subsequence of $S$. If $x_b$ does not occur in $S_1$ for any $b\in[1,\ell]$, then $S_1$ is zero-sum free. Thus, for any indices $1 \le a_1 < \dots < a_s\le \ell$ and any $v_i\in[1,p - 1]$ with $i\in[1,s]$, we set
\[
S_1 = \prod_{j = 1}^{r} e_j^{u_j} \prod_{i=1}^{s}x_{a_i}^{v_i},
\]
where $u_j\in[0,n-1]$. Since $S_1$ is zero-sum, we have
\[
 u_j = \big|n - \sum_{i=1}^{s} v_i \mathbf W(\ell) [a_i, j]\big|_n,\, j\in[1,\theta(\ell)],
\]
and $u_j = 0$ for $j>\theta(\ell)$.
Thus, together with \eqref{len} and Proposition \ref{PropM}, we obtain that
\[
|S_1| = \sum_{i=1}^{s}v_i + \sum_{j=1}^{|\mathbf W(\ell)|} \big|n - \sum_{i=1}^{s} v_i \mathbf W(\ell) [a_i, j]\big|_n = \omega(\ell),
\]
which completes the proof  of this lemma in  Case 1.

\item
\noindent
{\bf Case 2.} $p=2$.

It follows from $r\ge\theta(\ell;2)\ge 1$ that $\ell\ge 2$. 

Suppose that $r\ge 4$ and thus $\ell \ge 3$. Let 
$$\mathbf E(\ell) = 
\bigcup_{t=0}^{\ell-1} {\{0\}}^t\times\{q\}\times{\{0\}}^{\ell-t-1},$$
$$\mathbf F(\ell) =
\bigcup_{t=0}^{\ell-2} {\{0\}}^t\times\{q\}\times\{q\}\times{\{0\}}^{\ell-t-2},$$
$$\mathbf H(\ell) =
\bigcup_{t=0}^{\ell-2} {\{0\}}^t\times\{1\}\times\{q\}\times{\{0\}}^{\ell-t-2},$$
$$\mathbf I(\ell) = \{q\}\times{\{0\}}^{\ell-2}\times\{q\}$$
and 
$$\mathbf J(\ell) = \{q\}\times{\{0\}}^{\ell-2}\times\{1\}.$$
Let 
\begin{equation}\label{mwp2}
\mathbf W(\ell) = \mathbf M(\ell)\backslash \mathbf E(\ell)\backslash \mathbf F(\ell) \cup \mathbf H(\ell) \backslash \mathbf I(\ell) \cup \mathbf J(\ell).
\end{equation}
Thus by Proposition \ref{PropM}, we have  $|\mathbf W(\ell)| = |\mathbf M(\ell)|  - \ell -(\ell-1) + (\ell -1)-1 +1= \theta(\ell).$ Let  
$$\mathbf U(\ell)= \mathbf M(\ell)\big\backslash \big(\bigcup_{t=0}^{\ell-1} {\{0\}}^t\times\{q\}\times{\{0\}}^{\ell-t-1}\big).$$ 
By \eqref{mwp2}, for each $z\in [1,\ell]$, we can just change exactly one element $\mathbf U(\ell)[z,j_z]$ of $\mathbf U(\ell)$ from $q$ to $1$, to obtain  $\mathbf W(\ell)$. Also it should satisfy that, for all $\mathbf W(\ell)[x,j_z]$ with $z \ne x\in [1,\ell]$, there exists exactly one element $q$  and the others are $0$, and if $z_1 \ne z_2$, then $j_{z_1} \ne j_{z_2}$, where $z_1,z_2\in[1,\ell]$.

Hence, let indices $1 \le a_1 < \dots < a_s\le \ell$, for any $z\in \{a_1,\dots,a_s\}$, then either $$\sum_{i=1}^{s} \mathbf  U(\ell) [a_i, j_z]=q\text{ and }\sum_{i=1}^{s}  \mathbf W(\ell) [a_i, j_z] = 1 ,$$
or
$$\sum_{i=1}^{s} \mathbf  U(\ell)[a_i, j_z]=2q\text{ and } \sum_{i=1}^{s} \mathbf  W(\ell) [a_i, j_z] = q+1 .$$
So we have $$ \big|-  \sum_{i=1}^{s} \mathbf  W(\ell) [a_i, j_z]\big|_n - \big| - \sum_{i=1}^{s}  \mathbf U(\ell) [a_i, j_z]\big|_n  = q-1.$$
Together with  Proposition \ref{PropM} and $n=2q$, we have 
\begin{equation}
\begin{aligned}
\sum_{j=1}^{|\mathbf W(\ell)|} \big| n&- \sum_{i=1}^{s} \mathbf  W(\ell) [a_i, j]\big|_n \\
= &\sum_{j=1}^{|\mathbf W(\ell)|} \big| - \sum_{i=1}^{s}  \mathbf W(\ell) [a_i, j]\big|_n \\
 = &\sum_{j=1}^{|\mathbf M(\ell)|} \big|-  \sum_{i=1}^{s}  \mathbf M(\ell) (a_i,j) \big|_n  - \sum_{i=1}^{s}|-q|_n + \sum_{z\in \{a_1,\dots,a_s\}} (q-1)\\
 = &\sum_{j=1}^{|\mathbf M(\ell)|} \big|  \sum_{i=1}^{s}   \mathbf M(\ell) [a_i,j] \big|_n -  {s}q  + s(q-1)
=  \omega(\ell)-s.
\end{aligned}
\end{equation}

Suppose that $\ell=2$, let $\mathbf W(2)=\{(1,1)\}$. It is clear that  $|\mathbf W(2)|=\theta(2)=1$, and $\sum_{j=1}^{|\mathbf W(2)|} \big| n- \sum_{i=1}^{s} \mathbf  W(2) [a_i, j]\big|_n = \omega(2)-s$ for any indices  $1 \le a_1 < \dots < a_s\le \ell$.

Then by the similar proof in Case 1, we complete the proof.
\end{proof}

\begin{defi} (\cite{GZZ2015}) Define $\mathsf{disc}(G)$ to be the smallest positive integer $t$, such that every sequence over $G$ of length at least $t$ has two nonempty zero-sum subsequences of distinct lengths.
\end{defi}

By Theorem \ref{uniquelength}, we can derive the following corollary immediately.

\begin{cor}\label{disc}
Let $G = C_n^r$, where $r\in \mathbb N_+$ and $n\in \mathbb N_{\geq 2}$, and let $p$ be a prime divisor of $n$. If $\ell\in \mathbb N_+$ such that $r\in [\theta(\ell),\theta(\ell+1))$, then $\mathsf{disc}(G)\ge (n - 1)r + (p-1)\ell + 1$.
\end{cor}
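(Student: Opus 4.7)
The plan is to read this off directly from Theorem~\ref{uniquelength}, treating the corollary as a repackaging of that existence statement into the language of $\mathsf{disc}(G)$. The hypothesis $r \in [\theta(\ell),\theta(\ell+1))$ with $r,\ell\in\mathbb N_+$ forces in particular $r \geq \theta(\ell) \geq 1$ (the constraint $\theta(\ell)\geq 1$ is automatic whenever the interval meets $\mathbb N_+$, e.g.\ $\ell\geq 1$ for $p>2$ and $\ell\geq 2$ for $p=2$), so Theorem~\ref{uniquelength} applies verbatim.

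Invoking Theorem~\ref{uniquelength} I would obtain a sequence $S$ over $G=C_n^r$ of length
\[
|S| = (n-1)r + (p-1)\ell,
\]
with the property that every nonempty zero-sum subsequence $T\mid S$ satisfies $|T|=\omega(\ell;n,p)$. Consequently all nonempty zero-sum subsequences of $S$ share a common length, so $S$ contains no pair of nonempty zero-sum subsequences of distinct lengths. By the definition of $\mathsf{disc}(G)$, any sequence of length $\geq \mathsf{disc}(G)$ must contain two such distinct-length subsequences, hence $\mathsf{disc}(G) > |S|$, which rearranges to
\[
\mathsf{disc}(G) \geq (n-1)r + (p-1)\ell + 1,
\]
as required.

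Since the entire argument is a one-line translation, there is no genuine obstacle; the only thing worth commenting on is the role of the interval $[\theta(\ell),\theta(\ell+1))$. Its purpose is to pin down $\ell$ as the largest integer for which the construction of Theorem~\ref{uniquelength} fits inside $C_n^r$ (using $p$ as the chosen prime divisor of $n$), so that the resulting bound is as strong as this method allows. One could of course state weaker versions for smaller $\ell$, but the stated form packages the optimum for a given $r$.
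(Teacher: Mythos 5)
Your proposal is correct and is exactly the paper's argument: the paper derives the corollary "immediately" from Theorem~\ref{uniquelength} by noting that the constructed sequence of length $(n-1)r+(p-1)\ell$ has all nonempty zero-sum subsequences of a single length $\omega(\ell)$, so $\mathsf{disc}(G)$ must exceed that length. Your additional check that $r\in[\theta(\ell),\theta(\ell+1))$ with $r\in\mathbb N_+$ forces $\theta(\ell)\ge 1$ is a sensible (and accurate) verification of the theorem's hypothesis.
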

Note that, for $n=2$, the above bound equals $\mathsf{disc}(G)$ (see \cite[Theorem 1.3]{GZZ2015}).

\section{On the lower bounds of $\Dav(G)$ }

By Lemma \ref{nondisp2dg} we connect the lower bounds of $\Dav(G)$ to special non-dispersive sequences. This lemma is a crucial one to this paper.

\begin{lem}\label{nondisp2dg}
 Let  $G=G_1\oplus \dots \oplus G_t \oplus C_m$, where $t\in \mathbb N_{+}$, $m\in \mathbb N_{\ge 2}$, and $G_1,\dots,G_t$ are finite abelian groups. For every $i\in[1,t]$, let $S_i$ be a non-dispersive sequence over $G_i$ which only contains zero-sum subsequences of length $x_i$. If $y=\sum_{i=1}^t \gcd(x_i,m)<m$, then $\Dav(G) \ge \sum_{i=1}^t |S_i|+m-y$.
\end{lem}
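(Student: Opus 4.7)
The plan is to establish the lower bound by exhibiting an explicit zero-sum free sequence $S^*$ over $G$ of length $\sum_{i=1}^t |S_i| + m - y - 1$. Let $e$ be a generator of $C_m$, and for each $i \in [1,t]$ set $d_i = \gcd(x_i, m)$, so $y = \sum_{i=1}^t d_i$. Since $d_i = \gcd(x_i,m)$, I pick $\alpha_i \in \mathbb Z$ with $x_i \alpha_i \equiv d_i \pmod{m}$.

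I would build $S^*$ by attaching a $C_m$-coordinate to each $S_i$: for each $i$, let $T_i$ be the sequence obtained by replacing every term $g$ of $S_i$ (viewed inside $G_i \hookrightarrow G$) with the element $g + \alpha_i e \in G_i \oplus C_m \hookrightarrow G$; then set $S^* = \bigl(\prod_{i=1}^t T_i\bigr)\cdot e^{m-y-1}$. Its length is $\sum_{i=1}^t |S_i| + m - y - 1$, so it suffices to prove $S^*$ is zero-sum free, since this immediately yields $\Dav(G)\ge |S^*|+1 = \sum_{i=1}^t |S_i|+m-y$.

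The heart of the argument is a projection/residue computation. Suppose $T'$ is a nonempty zero-sum subsequence of $S^*$. Projecting $T'$ to each summand $G_j$, the $G_j$-component of $T'\cap T_j$ is a zero-sum subsequence of $S_j$; since $S_j$ is non-dispersive with all nonempty zero-sums of length $x_j$, the subsequence $T'\cap T_j$ has size either $0$ or $x_j$. Letting $I = \{j : |T'\cap T_j| = x_j\}$ and $k\in[0,m-y-1]$ the number of ``extra'' copies of $e$ picked up by $T'$, the $C_m$-component of $\sigma(T')$ gives $\sum_{j\in I} x_j\alpha_j + k \equiv \sum_{j\in I} d_j + k \equiv 0 \pmod m$. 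But $\sum_{j\in I} d_j \in [0,y]$ and $k\in[0,m-y-1]$, so the left-hand side lies in $[0,m-1]$; the congruence then forces $I=\varnothing$ and $k=0$, making $T'$ empty, a contradiction.

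I do not anticipate a real technical obstacle. The single design choice that makes the whole argument work is the selection of $\alpha_i$: the identity $x_i\alpha_i\equiv d_i\pmod m$ ensures that each ``mandatory'' block of $x_i$ selected terms from $T_i$ contributes exactly the minimal residue $d_i$ in the $C_m$-coordinate, so that the possible contributions from the $T_i$'s fit snugly inside $[0,y]$ and cannot be completed to a multiple of $m$ by the at most $m-y-1$ residual copies of $e$. The hypothesis $y<m$ is precisely what keeps this residue window strictly below $m$.
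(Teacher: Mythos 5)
Your proposal is correct and follows essentially the same route as the paper: the paper's proof builds the identical sequence $S=(S_1+u_1e)\cdots(S_t+u_te)\,e^{m-y-1}$ with $|x_iu_i|_m=\gcd(x_i,m)$ and rules out nonempty zero-sum subsequences by exactly the same residue-window argument ($y+z\le m-1$ forces the $C_m$-component to be nonzero). No substantive differences.
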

\begin{proof}
By results from the elementary number theory, for every $x_i$ with $i\in[1,t]$, there exists a $u_i\in[1,m-1]$ such that $|x_i u_i|_m=\gcd(x_i,m)$. 
Let $C_m=\langle e\rangle$. 
Consider the following sequence 
$$S=(S_1+u_1e)(S_2+u_2e)\dots (S_t +u_te) e^{m-y-1}.$$
Suppose that $S$ has a non-empty zero-sum subsequence $T$, and $$T=T_1T_2\dots T_t e^z \text{ with }T_i|(S_i+u_ie)\text{, }i\in[1,t]\text{ and }0\le z\le m-y-1.$$ 
We observe that the $S_i$'s and $e$ are independent and $S_i$ only contains zero-sum subsequences of length $x_i$. Thus  $|T_i|=x_i$ or $|T_i|=0$, for $i\in[1,t]$.
And the sum of $T$ is $ve$, where 
$$v=||T_1|u_1+|T_2|u_2+\dots+|T_t|u_t+z|_m.$$ 
Since $T$ is non-empty and 
$$\begin{aligned}
|x_1u_1|_m&+|x_2u_2|_m+\dots + |x_tu_t|_m+z\\
=&\sum_{i=1}^t \gcd(x_i,m)+z=y+z\le m-1,
\end{aligned}$$
it follows that $0<v<m$ and thus $T$ is not zero-sum. This contradicts the definition of $T$. Thus $S$ is zero-sum free and $\Dav(G)\ge |S|+1=  \sum_{i=1}^t |S_i|+m-y$.
\end{proof}

By Lemma \ref{nondisp2dg},  Theorem \ref{uniquelength} and Lemma \ref{zhihe}, we are able to construct long zero-sum free sequences over general abelian groups. Next, we would like to provide Theorem \ref{lzfs} and Corollary \ref{est} to easily estimate the growth of $\Dav(G)-\Dav^*(G)$ for large $r$ and  $\exp(G)$.

\begin{rem}\label{guaran}
Let  $G=C_n^r\oplus C_{kn}$ be a non-$p$-group with $n,k\in \mathbb N_{\ge 2}$. Then there exist $p$ and $k_1$ such that $p$ be a prime divisor of $n$, $k_1\in \mathbb N_{\geq 2}$ be a divisor of $k$ with $\gcd(p,k_1)=1$. We use this remark to guarantee that the result in Theorem \ref{lzfs} is not vacuous.
\end{rem}

\begin{proof}
 If $n=p^t>1$ is a prime power, since $G$ is a non-$p$-group,  there exists   $1<k_1 |k$ with $\gcd(p,k_1)=1.$ If $n$ has at least two distinct prime factors $p_1$ and $p_2$. Consider a prime factor $p_3$ of $k$, then either $\gcd(p_1,p_3)=1$ or $\gcd(p_2,p_3)=1$. Thus the existence is proved.

\end{proof}

\begin{thm}\label{lzfs}
Let  $G=C_n^r\oplus C_{kn}$ be a non-$p$-group with $n,k\in \mathbb N_{\ge 2}$. Let $p$ be a prime divisor of $n$, $k_1\in \mathbb N_{\geq 2}$ be a divisor of $k$, with $\gcd(p,k_1)=1$, and $kn=k_1m$ for some $m\in \mathbb N$. If $\ell\in \mathbb N_+$ and $t\in [1,k_1-1]$ with $r \ge t \theta(\ell)\ge 1$, then $$
\Dav(G) \ge \Dav^*(G) + t(p - 1)\ell -  tm.$$
\end{thm}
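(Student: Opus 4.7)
The plan is to combine the three ingredients assembled in the previous sections: Theorem~\ref{uniquelength}, which supplies long non-dispersive sequences over $C_n^{\theta(\ell)}$; Lemma~\ref{nondisp2dg}, which welds several non-dispersive sequences together through a cyclic factor; and Lemma~\ref{zhihe}, which lets us concentrate attention on a direct summand of $G$ that still carries the large cyclic factor $C_{kn}$. Set $H = C_n^{t\theta(\ell)} \oplus C_{kn}$; when $r > t\theta(\ell)$, Lemma~\ref{zhihe} applied with $z=1$ and $I_1$ the union of $t\theta(\ell)$ of the $C_n$-indices with the index of $C_{kn}$ reduces the theorem to $\Dav(H) - \Dav^*(H) \ge t(p-1)\ell - tm$, and when $r = t\theta(\ell)$ we already have $H = G$.

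Next I would split $C_n^{t\theta(\ell)} = \bigoplus_{i=1}^{t} C_n^{\theta(\ell)}$ and apply Theorem~\ref{uniquelength} to each block to obtain a non-dispersive sequence $S_i$ of length $(n-1)\theta(\ell) + (p-1)\ell$ in which every non-empty zero-sum subsequence has length exactly $x_i := \omega(\ell; n, p)$. I would then feed the $t$ sequences $S_i$ into Lemma~\ref{nondisp2dg}, taking the \emph{whole} cyclic factor $C_{kn}$ in the role of the lemma's $C_m$, so that its modulus is $kn$ and the relevant threshold is $y := t\gcd(\omega(\ell; n, p),\, kn)$. Provided $y < kn$, the lemma gives
\[
\Dav(H) \;\ge\; t\bigl((n-1)\theta(\ell) + (p-1)\ell\bigr) + kn - y,
\]
and subtracting $\Dav^*(H) = t\theta(\ell)(n-1) + kn$ collapses the target to $\Dav(H) - \Dav^*(H) \ge t(p-1)\ell - y$. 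The theorem thus reduces to the single inequality $\gcd(\omega(\ell; n, p),\, kn) \le m$, which simultaneously delivers $y \le tm$ (the required form of the error term) and, using $t \le k_1 - 1$, gives $y < k_1 m = kn$ (so that Lemma~\ref{nondisp2dg} is indeed applicable).

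Proving this gcd estimate is the crux of the argument and the only place the coprimality hypothesis $\gcd(p, k_1) = 1$ is spent. Writing $k = k_1 k'$ so that $m = k'n$, the identity $v_p(k) = v_p(k_1) + v_p(k') = v_p(k')$ yields $\gcd(p^{\ell-1}, k) \le p^{v_p(k')} \le k'$; since $\omega(\ell; n, p)$ equals $n\cdot p^{\ell-1}$ when $p > 2$ and $n\cdot 2^{\ell-2}$ when $p = 2$, in either case $\gcd(\omega(\ell; n, p),\, kn) = n\cdot\gcd(p^{\ell-1}, k) \le n k' = m$. I expect the main conceptual obstacle to be identifying the correct choice for the cyclic factor fed into Lemma~\ref{nondisp2dg}: the apparent choice, the order-$m$ subgroup of $C_{kn}$, is in general not a direct summand of $C_{kn}$ and the resulting $y$ can exceed $m$, leaving the lemma vacuous; using the whole factor $C_{kn}$ enlarges the modulus but, thanks precisely to $\gcd(p, k_1) = 1$, the bound $\gcd(\omega, kn) \le m$ recovers both the applicability of the lemma and the sharp error term $-tm$.
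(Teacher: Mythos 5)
Your proposal is correct and follows essentially the same route as the paper: split $C_n^{t\theta(\ell)}$ into $t$ blocks, apply Theorem~\ref{uniquelength} to each, glue the resulting non-dispersive sequences through the full factor $C_{kn}$ via Lemma~\ref{nondisp2dg}, and close with the same estimate $\gcd(\omega(\ell),kn)=n\gcd(p^{\ell-1},k)\le m$ from $\gcd(p,k_1)=1$. The only (immaterial) difference is how the leftover rank $r-t\theta(\ell)$ is handled: you discard it with Lemma~\ref{zhihe}, while the paper absorbs it into the last block $G_t$ and applies Theorem~\ref{uniquelength} there with the same $\ell$.
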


\begin{proof}
Let $(e_1,\dots,e_r)$ be a basis of $C_n^r$ with $\ord(e_1)=\dots=\ord(e_r)=n$.
Let \[
G_j = \oplus_{i = 1+(j-1)\theta(\ell) }^{j \theta(\ell)} \langle e_{i}\rangle,\text{ where }j\in[1,t-1],
\]
and let $G_{t}=\oplus_{i=1+(t-1)\theta(\ell)}^{r} \langle e_{i}\rangle$. By Theorem \ref{uniquelength}, there exists a sequence $S_j$ over each $G_j$ with $$|S_j|=\Dav^*(G_j)-1+(p-1)\ell,$$ which only contains zero-sum subsequences of a unique length $\omega(\ell)$. Hence, by $\gcd(p,k_1)=1$, we have 
$$\begin{aligned}
\gcd(\omega(\ell),kn)&\le\gcd(p^{\ell-1}n, kn)=n\gcd(p^{\ell-1},k)\\
&=n\gcd\left(p^{\ell-1},\frac{k}{k_1}\right)\le \frac{nk}{k_1}= m.\end{aligned}$$
And $\sum_{j=1}^{t}\gcd(\omega(\ell),kn)=tm<kn$. By Lemma \ref{nondisp2dg}, it follows that $$\begin{aligned}\Dav(G) &\geq \sum_{j=1}^t|S_j|+kn-\sum_{j=1}^t\gcd(\omega(\ell),kn)\\
 &\geq \sum_{j=1}^t|S_j|+kn-mt=\Dav^*(G) + ((p - 1)\ell - m)t.
\end{aligned}$$
\end{proof}

\begin{cor}\label{est}
Let  $G=C_n^r\oplus C_{kn}$ be a non-$p$-group with $n,k\in \mathbb N_{\ge 2}$. Let $p$ be a prime divisor of $n$, $k_1\in \mathbb N_{\geq 2}$ be a divisor of $k$, with $\gcd(p,k_1)=1$, and $kn=k_1m$ for some $m\in \mathbb N$. For any integer $t \in [1, k_1-1]$, we have 

\begin{equation}\label{xiajie}
\Dav(G) > \Dav^*(G)+\frac{t(p-1)}{\log p}\log  r-t(p-1)(\log_p  t +1)-tm.
\end{equation}
\end{cor}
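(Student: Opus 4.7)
The plan is to apply Theorem \ref{lzfs} with the largest admissible value of $\ell$ and then translate the resulting linear-in-$\ell$ bound into a logarithmic-in-$r$ bound. First I would dispose of the trivial range $r < t$: here $\log_p r < \log_p t$, so the right-hand side of \eqref{xiajie} is strictly less than $\Dav^*(G) - t(p-1) - tm < \Dav^*(G) \le \Dav(G)$, and the inequality holds without any work.

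For $r \ge t$, let $\ell_0$ be the largest positive integer with $t\theta(\ell_0;p) \le r$. When $p > 2$ one has $\ell_0 \ge 1$ because $t\theta(1;p) = t \le r$; when $p = 2$ one has $\ell_0 \ge 2$ because $\theta(1;2) = 0$ while $t\theta(2;2) = t \le r$. In either case the hypothesis $r \ge t\theta(\ell_0;p) \ge 1$ of Theorem \ref{lzfs} is satisfied, and by maximality of $\ell_0$ we have $r < t\theta(\ell_0 + 1;p)$.

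Next I would bound $\ell_0$ from below in terms of $r$ via the elementary estimates $\theta(\ell+1;p) < \tfrac{2p^{\ell+1}}{p-1}$ for $p > 2$ and $\theta(\ell+1;2) < 2^{\ell+1}$. The first gives $p^{\ell_0+1} > \tfrac{r(p-1)}{2t} \ge \tfrac{r}{t}$ (since $p-1 \ge 2$ for $p \ge 3$), and the second gives $2^{\ell_0+1} > r/t$ directly, so in both cases
\[
\ell_0 > \log_p(r/t) - 1 = \log_p r - \log_p t - 1.
\]
Feeding $\ell = \ell_0$ into Theorem \ref{lzfs} and rewriting $(p-1)\log_p r = (p-1)\log r/\log p$ then yields
\[
\Dav(G) \ge \Dav^*(G) + t(p-1)\ell_0 - tm > \Dav^*(G) + \frac{t(p-1)}{\log p}\log r - t(p-1)(\log_p t + 1) - tm,
\]
which is exactly \eqref{xiajie}.

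The only mildly delicate point is the case split between $p = 2$ and $p \ge 3$ in estimating $\theta(\ell_0 + 1;p)$: the closed forms differ, and for $p > 2$ one picks up a factor $(p-1)/2$ that must be checked to be at least $1$. Both branches nonetheless deliver the same lower bound $\ell_0 > \log_p(r/t) - 1$, so no serious obstacle should arise; the rest is routine bookkeeping.
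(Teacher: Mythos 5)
Your argument is correct and follows essentially the same route as the paper: take $\ell$ maximal with $t\theta(\ell)\le r$, bound $\theta(\ell+1)$ from above by (a constant multiple of) $p^{\ell+1}$ to conclude $\ell>\log_p\frac{r}{t}-1$, and substitute into Theorem \ref{lzfs}. The only difference is that you explicitly dispose of the degenerate range $r<t$, where no admissible $\ell$ exists and \eqref{xiajie} holds trivially because its right-hand side drops below $\Dav^*(G)$ --- a small gap the paper's own proof glosses over.
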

\begin{proof}
In Remark \ref{guaran}, we proved the existence of $p$ and $k_1$. For every $r\in \mathbb N_+$,  there exists an $\ell\in \mathbb N_+$ such that $\theta(\ell)\ge 1$ and  $r\in[t\theta(\ell),t\theta(\ell+1) )$. By the definition of $\theta(\ell)$, we have $\theta(\ell+1)<p^{\ell+1}.$ Thus $r<t\theta(\ell+1)<tp^{\ell+1}$. It follows that $\ell > \log_p \frac{r}{t} -1.$ By Theorem \ref{lzfs}, we have $$\begin{aligned}\Dav(G)&\ge \Dav^*(G)+t((p-1)\ell-m)\\
&>\Dav^*(G)+t\left(\left(p-1\right) \left(\log_p \frac{r}{t} -1\right)-m\right) \\
&=\Dav^*(G)+\frac{t(p-1)}{\log p}\log  r-t(p-1)(\log_p  t +1)-tm.\end{aligned}$$ 
\end{proof}

\begin{rem}\label{taolun}
Let  $G=C_n^r\oplus C_{kn}$ be a non-$p$-group with $n,k\in \mathbb N_{\ge 2}$. In Corollary \ref{est}, let $t=1$, we have 
\begin{equation}\label{xiajiet1}
\Dav(G) > \Dav^*(G) + (p - 1)\log_p r - m-p+1.
\end{equation}
 So $\Dav(G)-\Dav^*(G)$ grows at least logarithmically with respect to $r$. And this inequality does not depend on the size of $k_1$. That is to say, it can be $\Dav(G)-\Dav^*(G)>0$ for arbitrarily large exponent of $G$.

We have $\Dav(G) -\Dav^*(G)>t\left(\left(p-1\right) \left(\log_p \frac{r}{t} -1\right)-m\right)$ by Corollary \ref{est}. Fix $p$ and $m$. Let $r$ be larger than some constant, by \eqref{xiajiet1}, then there always exists $t\in [1,k_1-1]$ such that $\Dav(G) -\Dav^*(G)>0$. Let $t=c_1 r$, where $c_1\in (0,1)$ is a real number such that $\left(p-1\right) \left(\log_p \frac{r}{t} -1\right)-m>0$. Then for sufficiently large $k_1=k_1(r)$ such that $t\in [1,k_1]$, by Corollary \ref{est}, we always have $\Dav(G) -\Dav^*(G)>c_2 r$, where $c_2>0$ is a constant determined by $p$, $m$ and $c_1$. Note that $c_1$ is bounded by $p$ and $m$. See \eqref{jiea} for more information about $\frac{\Dav(G) -\Dav^*(G)}{r}$.

On the other hand, fix $n$ and $k$, for sufficiently large $r$, we can let $t = k_1-1$ and $p$ be as large as possible to get larger $\Dav(G)-\Dav^*(G)$ in \eqref{xiajie}. 
\end{rem}

Next, we give a general lower bound to   abelian non-$p$-groups and express the lower bound of  $\Dav(G)  - \Dav^*(G)$  by the rank and the exponent of $G$.  In Theorem \ref{gene}, we define $\log(0)=-\infty$ for the case of $|G|=m^r$.  

\begin{thm}\label{gene}
Let $G$ be a finite abelian non-$p$-group of rank $r\in \mathbb N_+$ and exponent $m\in \mathbb N_{\geq 2}$. Then
$$
\Dav(G)  \ge \Dav^*(G) + \max\{  \log_2   \log \frac{m^r}{|G|}- 2\log_2   \log \frac{m}{2}-m+\log_2\log 2+1,\,0\}.
$$
\end{thm}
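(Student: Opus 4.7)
The plan is to apply Corollary~\ref{est} to a carefully chosen direct summand $H$ of $G$ (or directly to $G$ when $H = G$) and then use Lemma~\ref{zhihe} to transfer the bound. I may assume $|G| < m^r$, since otherwise $G \cong C_m^r$ and the claimed maximum equals $0$. Write $G = \bigoplus_{i=1}^r C_{n_i}$ in canonical form, set $\nu := \log(m^r/|G|) = \sum_i \log(m/n_i) > 0$, and let $d_1 | d_2 | \cdots | d_L = m$ be the distinct values taken by the $n_i$'s, with multiplicities $s_l := |\{i : n_i = d_l\}|$.

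The crucial structural ingredient is that, because the $d_l$'s form a strict divisibility chain with $d_1 \geq 2$, each ratio $d_{l+1}/d_l$ is an integer $\geq 2$, so $m = d_L \geq 2^L$ and hence $L - 1 \leq \log_2(m/2)$. Together with $\nu = \sum_{l<L} s_l \log(m/d_l)$ and pigeonhole, this yields an index $l^*$ with $s_{l^*}\log(m/d_{l^*}) \geq \nu/(L-1)$; since $\log(m/d_{l^*}) \leq \log(m/2)$, it follows that
\[
 s_{l^*} \;\geq\; \frac{\nu}{(L-1)\log(m/2)} \;\geq\; \frac{\nu \log 2}{(\log(m/2))^{2}}.
\]
I would then form $H := \bigoplus_{i \in I^*} C_{n_i} \cong C_{d_{l^*}}^{s_{l^*}} \oplus C_m$ with $I^* := \{i : n_i = d_{l^*}\} \cup \{r\}$, and apply Corollary~\ref{est} with $n = d_{l^*}$, $k = m/d_{l^*}$, $t = 1$, and a prime $p \mid d_{l^*}$ chosen so that $k_1 := (\text{$p$-free part of }m)/(\text{$p$-free part of }d_{l^*}) \geq 2$. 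One then has $M := m/k_1 \leq m/2$ and $p \leq M$, so $(p-1) + M \leq m-1$; combined with the elementary inequality $(p-1)\log_p s \geq \log_2 s$ valid for every prime $p \geq 2$, Corollary~\ref{est} yields $\Dav(H) - \Dav^*(H) > \log_2 s_{l^*} - (m - 1)$. Lemma~\ref{zhihe} (or nothing, in the degenerate case $H = G$) transfers this to $G$, and substituting the above lower bound on $s_{l^*}$ produces exactly $\log_2 \nu - 2\log_2\log(m/2) - m + \log_2\log 2 + 1$.

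The one step requiring genuine care is exhibiting the prime $p$ witnessing $k_1 \geq 2$: if for some initial $p_0 \mid d_{l^*}$ the two $p_0$-free parts coincided, then $d_{l^*} = p_0^{b} m'$ with $m' := m/p_0^{a_{p_0}} \geq 2$ (using $\omega(m) \geq 2$), and switching to any prime $q \mid m'$ (which automatically divides $d_{l^*}$) yields $q$-free parts that differ strictly, as required. Apart from this small case analysis the argument is a routine estimate; in particular, the two factors of $\log_2\log(m/2)$ in the statement arise cleanly, one from $\log(m/d_{l^*}) \leq \log(m/2)$ and one from the chain bound $L - 1 \leq \log_2(m/2)$.
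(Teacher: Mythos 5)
Your proposal is correct and follows essentially the same route as the paper: isolate a summand $C_{d}^{s}\oplus C_m$ with $s\gtrsim \log(m^r/|G|)\log 2/\log^2(m/2)$, apply the $t=1$ case of Corollary~\ref{est} with $(p-1)\log_p s\ge\log_2 s$ and $(p-1)+m/k_1\le m-1$, and transfer via Lemma~\ref{zhihe}. The only cosmetic differences are that the paper selects the invariant factor of maximal multiplicity and bounds the chain length by $\log_2(m/n_1)$ rather than using your weighted pigeonhole, and it delegates the existence of the pair $(p,k_1)$ to Remark~\ref{guaran} instead of constructing it explicitly.
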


\begin{proof}
$\Dav(G)\ge \Dav^*(G)$ is trivial.

Note that any abelian non-$p$-group $G$'s exponent $m\ge 6$. So $\log \log \frac{m}{2}>0$. If $|G|= m^r$, since we define that $\log(0)=-\infty$, the inequality in this theorem holds. 

Suppose that $|G|\ne m^r$ and $$G=C^{x_1}_{n_1}\oplus \dots \oplus C^{x_t}_{n_{t}}\oplus C^{x}_{m}$$ with $n_1| \dots |n_{t} |m$ and $1<n_1<\dots<n_t<m$.  
Let $x_a=\max\{x_i, i\in[1,t]\}$.   By Lemma \ref{zhihe},  \eqref{xiajiet1} and $\frac{p-1}{\log p}\ge \frac{1}{\log 2}$, we have
\begin{equation}\label{zuidaxa}
\Dav(G) > \Dav^*(G) + \log_2  {x_a} -  m+1.
\end{equation}

Since $m\ge 2 n_t\ge 2^2 n_{t-1} \ge \dots \ge 2^{t} n_1,$ we have $t\le \log_2 \frac{m}{n_1}$. 
Together with
$x_a t\ge x_1+\dots+x_t=r-x.$
We derive that 
\begin{equation}\label{xarx}
x_a\ge \frac{r-x}{\log_2 \frac{m}{n_1}}.
\end{equation}
By $$\frac{m^r}{|G|}=\frac{m^r}{n_1^{x_1}  n_2^{x_2}  \dots n_t^{x_t} m^x}\le \left(\frac{m}{n_1}\right)^{r-x},$$ we have $r-x\ge \log_{\frac{m}{n_1}}{\frac{m^r}{|G|}}.$ Together with  \eqref{xarx}, we have $$x_a\ge  \frac{\log_{\frac{m}{n_1}}\frac{m^r}{|G|}}{\log_2 \frac{m}{n_1}}
=\frac{\log \frac{m^r}{|G|} \log 2}{\log^2 \frac{m}{n_1}}.$$
Then by \eqref{zuidaxa}, it follows that 
\[\begin{aligned}
\Dav(G) &> \Dav^*(G) + \log_2 \frac{\log \frac{m^r}{|G|} \log 2}{\log^2 \frac{m}{n_1}} - m +1\\
&\ge \Dav^*(G) + \log_2   \log \frac{m^r}{|G|}- 2\log_2   \log \frac{m}{2}-m+\log_2\log 2+1.
\end{aligned}\]
Thus the theorem is proved. 
\end{proof}

So far, all the known groups $G$ with $\Dav(G)-\Dav^*(G)>0$ are non-$p$-groups  satisfying $|G|<\exp(G)^{r(G)}$. We would like to generalize this to a corollary as follows.

\begin{cor}\label{gwithe}
Given a non-prime power $m>0$. Let $G$ be  abelian groups with exponent $m$ and rank $r$, then for each $N>0$ there exists an $\varepsilon=\varepsilon(N;m)>0$ such that if $\frac{|G|}{m^{r}}<\varepsilon $, then $\Dav(G)-\Dav^*(G)>N$.
\end{cor}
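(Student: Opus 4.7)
My plan is to derive Corollary \ref{gwithe} as an immediate quantitative unpacking of Theorem \ref{gene}. First I would verify the non-$p$-group hypothesis that Theorem \ref{gene} requires. Since $m$ is not a prime power, it has at least two distinct prime divisors $p_1$ and $p_2$; since $\exp(G)=m$, there is some $g\in G$ of order $m$, and $\langle g\rangle\cong C_m$ already contains elements of orders $p_1$ and $p_2$. Hence $|G|$ is divisible by two distinct primes, so $G$ is indeed a non-$p$-group and Theorem \ref{gene} applies.

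With the hypothesis in place, Theorem \ref{gene} yields
$$\Dav(G)-\Dav^*(G)\;\ge\;\log_2\log\frac{m^r}{|G|}\;-\;C(m),$$
where
$$C(m)\;:=\;2\log_2\log\frac{m}{2}\;+\;m\;-\;\log_2\log 2\;-\;1$$
depends only on $m$. Note that $C(m)$ is a finite real number, since every non-prime-power $m$ satisfies $m\ge 6$ and hence $\log(m/2)\ge\log 3>0$. Given $N>0$, it therefore suffices to force $\log_2\log(m^r/|G|)>N+C(m)$, which I would ensure by setting
$$\varepsilon\;=\;\varepsilon(N;m)\;:=\;\exp\!\bigl(-\,2^{N+C(m)}\bigr)\;>\;0.$$
Then $|G|/m^r<\varepsilon$ implies $\log(m^r/|G|)>2^{N+C(m)}$, so $\log_2\log(m^r/|G|)>N+C(m)$, and substituting back into the bound of Theorem \ref{gene} gives $\Dav(G)-\Dav^*(G)>N$, as required.

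There is no real obstacle here: all the substantive work is already carried out in Theorem \ref{gene} (which itself rests on Theorem \ref{uniquelength} together with Lemmas \ref{nondisp2dg} and \ref{zhihe}), so the corollary is simply the asymptotic statement that the double logarithm in Theorem \ref{gene} can be driven past any prescribed threshold $N$ by making the dispersion ratio $|G|/m^r$ small enough. The only minor care needed is to keep the constant $C(m)$ finite, which is handled by the observation that the smallest non-prime-power is $6$.
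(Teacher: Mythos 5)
Your proposal is correct and follows exactly the paper's route: the paper's proof of this corollary is simply ``This follows directly from Theorem \ref{gene},'' and your argument is the explicit quantitative unpacking of that implication, with a valid verification of the non-$p$-group hypothesis and a correct choice $\varepsilon=\exp(-2^{N+C(m)})$. No issues.
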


\begin{proof}
This follows directly from Theorem \ref{gene}. 
\end{proof}

\begin{rem}
Let  $G=C_n^r\oplus C_{kn}$ be a non-$p$-group with $n,k\in \mathbb N_{\ge 2}$, we can consider the    {\sl small rank $r$} such that $\Dav(G)>\Dav^*(G)$. 
Theorem \ref{lzfs} shows that if $(p - 1)\ell -  m>0,$ then $\Dav(G)>\Dav^*(G).$ Thus, let $\ell= \lfloor \frac{m}{p-1} \rfloor+1.$
And $r=\theta ( \lfloor \frac{m}{p-1} \rfloor+1 )$ is a    small $r$ such that $\Dav(G)>\Dav^*(G)$.  
\end{rem}

The groups $G$ of {\sl small rank} with $\Dav(G)>\Dav^*(G)$ were viewed as ``{\sl the interesting groups}" on page $148$ in \cite{geroldinger1992Dav}.  We  give   following corollary about the small rank.

\begin{cor}
\begin{enumerate}[1)]
\item Let $G=C_p^r\oplus C_{kp}$ with  $p$ odd prime and $\gcd(p,k)=1$. If $r\ge 2p,$ then $\Dav(G)- \Dav^*(G) \ge  p-2>0.$ Thus 
\begin{equation}\label{jiea}
\sup_{\text{all finite abelian  group } G}\frac{\Dav(G)- \Dav^*(G) }{r}\ge \frac{1}{2}.\end{equation}

\item Let $G=C_2^r\oplus C_{2^t k}$ with $k>2$ odd and integer $t\ge 1$. If $r\ge 2^{2^t+1}-2^t-2$, then $\Dav(G)\ge \Dav^*(G) + 1.$
\end{enumerate}
\end{cor}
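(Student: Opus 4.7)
Both parts of this corollary should fall out almost immediately from Theorem \ref{lzfs}; the plan is simply to choose the parameters $n$, $k_1$, $m$, $t$ and $\ell$ appropriately in each case, and then to record the asymptotic consequence for part 1).

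For part 1), with $G=C_p^r\oplus C_{kp}$, $p$ odd and $\gcd(p,k)=1$, I would apply Theorem \ref{lzfs} with $n=p$, $k_1=k$ (so that $kn=k_1 m$ forces $m=p$), the parameter $t$ from the theorem set to $1$, and $\ell=2$. Then $\theta(2)=\frac{2(p^2-1)}{p-1}-2=2p$, so the hypothesis $r\ge 2p$ is precisely $r\ge t\theta(\ell)$, and the theorem yields
\[
\Dav(G)-\Dav^*(G)\ge (p-1)\ell-m=2(p-1)-p=p-2.
\]
For $p\ge 3$ this is positive. The supremum statement \eqref{jiea} then comes from a limiting argument: taking any odd prime $p\ge 3$ and any $k\ge 2$ coprime to $p$ (e.g.\ $k=2$), the group $G=C_p^{2p}\oplus C_{kp}$ has rank $r(G)=2p+1$ and satisfies $\Dav(G)-\Dav^*(G)\ge p-2$, hence
\[
\frac{\Dav(G)-\Dav^*(G)}{r(G)}\ge \frac{p-2}{2p+1}\longrightarrow \frac12\qquad (p\to\infty).
\]

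For part 2), I would view $G=C_2^r\oplus C_{2^t k}$ in the form $C_n^r\oplus C_{Kn}$ with $n=2$ and $K=2^{t-1}k$, choose the prime divisor $p=2$ of $n$, and pick $k_1=k$, which is an odd (hence coprime to $p$) divisor of $K$ with $k_1\ge 3\ge 2$. Then $k_1 m=Kn=2^t k$ forces $m=2^t$. I would then take the parameter $t$ of the theorem equal to $1$ (allowed since $k_1-1\ge 2$) and $\ell=2^t+1$, giving
\[
(p-1)\ell-m=(2^t+1)-2^t=1
\]
and $\theta(\ell)=2^\ell-1-\ell=2^{2^t+1}-2^t-2$. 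Thus whenever $r\ge 2^{2^t+1}-2^t-2$, Theorem \ref{lzfs} yields $\Dav(G)\ge\Dav^*(G)+1$, as claimed.

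There is really no substantive obstacle here: both parts are just the result of plugging specific admissible values into Theorem \ref{lzfs}. The only thing one must be careful about is bookkeeping between the roles of $k$, $k_1$ and $m$ in part 2) (since the letter $t$ is overloaded with the exponent in $2^t k$ and with the parameter of the theorem), and verifying that the chosen $k_1$ meets the coprimality and divisibility requirements. Once the parameters are pinned down, the inequalities for $\theta(\ell)$ and for $(p-1)\ell-m$ are immediate from the formulas defining $\theta$.
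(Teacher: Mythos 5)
Your proposal is correct and follows exactly the paper's route: both parts are obtained by plugging $\ell=2$ (resp.\ $\ell=2^t+1$) with the theorem's parameter $t=1$ into Theorem \ref{lzfs}, and your explicit identifications $k_1=k$, $m=p$ (resp.\ $m=2^t$) together with the limit $\frac{p-2}{2p+1}\to\frac12$ supply the small bookkeeping details the paper leaves implicit.
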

\begin{proof}
\begin{enumerate}[1)]
\item  Let $\ell=2$, then $\theta(\ell)=2p.$ By Theorem  \ref{lzfs}, if $r\ge 1\cdot \theta(\ell)=2p,$ then $\Dav(G)- \Dav^*(G) \ge  (p-1)\ell-p=p-2>0 .$

\item Let $\ell=2^t+1$ and $p=2$, then $\theta(\ell)=2^{2^t+1}-2-2^t.$  By Theorem \ref{lzfs}, if $r\ge  \theta(\ell),$ then $\Dav(G)\ge \Dav^*(G) + \ell-2^t=\Dav^*(G)+ 1 .$
\end{enumerate}
\end{proof}

In particular, let $G=C_2^r\oplus C_{2k}$ with  $k\ge 3$ odd. If $r\ge 4,$ then $\Dav(G)- \Dav^*(G) \ge  1.$ Note that for abelian group $G=C_2^4\oplus C_{2k}$ with odd $k\ge 70$, it is proved that $\Dav(G)= \Dav^*(G) + 1$ (see  \cite{SavChen2012}). In addition, it is interesting to determine $\sup\frac{\Dav(G)- \Dav^*(G) }{r}$, where $G$ runs over all finite abelian groups.

\section{Concluding remarks}

\noindent
{\bf Open problem.} {\sl By Lemma \ref{zhihe}, a natural question occurs. What are the groups $G$, with  the invariant factor decomposition
$$G=C_{n_1}\oplus C_{n_2}\oplus \dots \oplus C_{n_r}\text{ with } 1<n_1|n_2\dots |n_r,$$ 
 such that there do not exist  groups 
$$H_x= \oplus_{i\in I_x} C_{n_{i}},\text{ with }\varnothing\neq I_x \subsetneq [1,r]\text{ and }I_{x} \cap I_{y}=\varnothing \text{  for any } x,y\in[1,z],$$
satisfying that
$\Dav(G)-\Dav^*(G)=\sum_{x=1}^{z}(\Dav(H_x)-\Dav^*(H_x)).$
}

\bigskip
\noindent
{\bf Acknowledgments.} {The author would like to thank all the anonymous referees for their careful reading and many valuable suggestions on improving the paper. We also would like to thank Prof. W. Gao for his helpful comments and suggestions. This work was supported by the 973 Program of China (Grant No.2013CB834204), the PCSIRT Project of the Ministry of Science and Technology, and the National Science Foundation of China (Grant No.11671218). }


\begin{thebibliography}{10}


\bibitem{re[1]}
G.~Bhowmik and J.-C.~Schlage-Puchta, \emph{Davenport’s constant for groups of the form $Z_3\oplus Z_3\oplus Z_{3d}$}, Additive Combinatorics (A.~Granville, M.B.~Nathanson, and J.~Solymosi, eds.), CRM Proceedings and Lecture Notes, vol. 43, American Mathematical Society, 2007, pp. 307--326.
\bibitem{re[2]}
G.~Bhowmik and J.-C.~Schlage-Puchta, \emph{Davenport's constant for groups with large exponent}, Theory and applications of finite fields, Contemporary Mathematics, vol. 579, AMS, 2013, pp. 21--31.

\bibitem{E1969}
P. van~Emde Boas, \emph{A combinatorial problem on finite abelian groups II}, Reports ZW-1969-007, Mathematical Centre, Amsterdam, 1969.

\bibitem{Er-Sz76}
P.~Erd\H{o}s and E.~Szemer\'{e}di, \emph{On a problem of Graham}, Publ. Math. Debrecen. \textbf{23} (1976), 123--127.

\bibitem{Ga-Ha-Wa10}
W.~Gao, Y.~Hamidoune and G.~Wang, \emph{Distinct
lengths modular zero-sum subsequences: a proof of Graham's
conjecture}, J. Number Theory \textbf{106} (2010) 1425--1431.

\bibitem{gao1999}
W.~Gao and A.~Geroldinger, \emph{On long minimal zero sequences in finite abelian
groups}, Period. Math. Hung. \textbf{38} (1999), 179--211.

\bibitem{GaoGero06}
W.~Gao and A.~Geroldinger, {\it Zero-sum problems in finite abelian groups : a survey}, 
Expo. Math. {\bf 24} (2006), 337--369.

\bibitem{GZZ2015}
W.~Gao, P.~Zhao and J.~Zhuang, \emph{Zero-sum subsequences of distinct lengths}, Int. J. Number Theory \textbf{11(07)} (2015), 2141--2150.

\bibitem{re[3]}
A.~Geroldinger, \emph{Additive group theory and non-unique factorizations}, Combinatorial Number Theory and Additive Group Theory, Advanced Courses in Mathematics CRM Barcelona, Birkh\H{a}user, 2009, pp. 1--86.

\bibitem{GeKo2006non}
A.~Geroldinger and F.~Halter-Koch, \emph{Non-Unique Factorizations. Algebraic, Combinatorial and Analytic Theory}, Pure and Applied Mathematics 278, Chapman \& Hall/CRC 2006.

\bibitem{geroldinger2012Dav}
A.~Geroldinger, M.~Liebmann and A.~Philipp, \emph{On the Davenport constant and on the structure of extremal zero-sum free sequences}, Periodica Mathematica Hungarica \textbf{64(2)} (2012), 213--225.


\bibitem{geroldinger1992Dav}
A.~Geroldinger and R.~Schneider, \emph{On Davenport’s constant}, J. Comb. Theory Ser. A \textbf{61} (1992), 147--152.


\bibitem{re[5]}
B.~Girard, \emph{An asymptotically tight bound for the Davenport constant}, J. Ec. polytech. Math. \textbf{5} (2018), 605--611.

\bibitem{Gir12}
B.~Girard, \emph{On the existence of zero-sum subsequences of distinct lengths}, Rocky Mountain J. Math.  \textbf{42} (2012), 583--596.


\bibitem{re[6]}
B.~Girard and W.A.~Schmid, \emph{Direct zero-sum problems for certain groups of rank three}, J. Number Theory, https://doi.org/10.1016/j.jnt.2018.08.016.
\bibitem{re[7]}
B.~Girard and W.A.~Schmid, \emph{Inverse zero-sum problems for certain groups of rank three}, https://arxiv.org/abs/1809.03178.

\bibitem{Gr11}
D.J.~Grynkiewicz, \emph{Note on a conjecture of
Graham}, European J. Combin. \textbf{32} (2011), 1334--1344.

\bibitem{Gr13}
D.J.~Grynkiewicz, \emph{Structural Additive Theory}, Developments in Mathematics 30, Springer, Cham, 2013.


\bibitem{mazur1992note}
M.~Mazur, \emph{A note on the growth of Davenport's constant}, Manuscripta Mathematica \textbf{74(1)} (1992), 229--235. 

\bibitem{SavChen2012}
S.~Savchev and F.~Chen, \emph{Long minimal zero-sum sequences in the group $C_2^{r-1} \oplus C_{2k}$}, Integers (2012), A51.




\end{thebibliography}
\end{document}